\documentclass[a4paper]{article}
\usepackage{hyperref,authblk,amsmath,amssymb,amsthm,color}
\usepackage{pgf,tikz}
\usepackage{mathrsfs}
\usetikzlibrary{arrows}
\usepackage{circuitikz}

\parindent=0mm
\parskip=2mm

\title{Resistors in dual networks}
\author{Martina Furrer} 
\author{Norbert Hungerb\"uhler} 
\author{Simon Jantschgi}
\affil{Department of Mathematics, ETH Z\"urich, 8092 Z\"urich, Switzerland}
\date{}
\newtheorem{theorem}{Theorem}
\newtheorem{proposition}[theorem]{Proposition}

\theoremstyle{definition}
\newtheorem{definition}[theorem]{Definition}
\newtheorem{example}[theorem]{Example}

\begin{document}
\maketitle
\begin{abstract}
\noindent Let $G$ be a finite plane multigraph and $G'$ its dual. Each edge $e$ of $G$ is
interpreted as a resistor of resistance $R_e$, and the dual edge $e'$ 
is assigned the dual resistance $R_{e'}:=1/R_e$. Then the equivalent resistance $r_e$
over $e$ and the  equivalent resistance $r_{e'}$ over $e'$ satisfy
$r_e/R_e+r_{e'}/R_{e'}=1$. We provide a graph theoretic proof of this relation
by expressing the resistances in terms of sums of weights of spanning trees
in $G$ and $G'$ respectively.
\end{abstract}
{\bf Keywords:} dual graphs, electrical networks, equivalent resistance

{\bf AMS classification:} 05C50 (05C05 05C12 15A15 15A18) 

\section{Introduction}
The systematic study of electrical resistor networks goes back to 
the german physicist Gustav Robert Kirchhoff in the middle of the 
19th century. In particular, Kirchhoff's two circuit laws and Ohm's law allow to fully
describe the electric current and  potential in a given static network
of resistors and voltage sources. In the course of his investigations, Kirchhoff discovered
the Matrix Tree Theorem, which states that the number of spanning 
trees in a graph $G$ is equal to any cofactor of the Laplacian matrix of $G$.
Surprisingly, this purely graph theoretic fact has a deep connection
to the physical question of the equivalent resistance between two
vertices of an electric network.


In the simplest situation, a finite simple graph $G$ can be interpreted as an electrical network
by considering each edge as a resistor of 1 Ohm. Then, one of Kirchhoff's results states that the 
equivalent resistance over an edge $e$ in this graph
is given by the quotient of the number of spanning trees containing
the edge $e$ divided by the total number of spanning trees in $G$.

More generally, we may consider a finite multigraph $G$ and assign to each edge $e$ of $G$ a weight $R_e>0$
interpreted as resistance of $e$. Also in this case, the equivalent resistance 
between two vertices can be  expressed  in terms of sums of weights of spanning trees (see
Section~\ref{sec-preliminaries}). 

Consider a cube as a graph with unit resistance on each edge
and the dual polyhedron, the octahedron, in the same way.
The equivalent resistance over an edge of the cube turns out to be
$7/12$ Ohm, the equivalent resistance over an edge of the octahedron
is $5/12$ Ohm. Observe, that these values add up to $1$!
The same phenomenon occurs for the dodecahedron with equivalent resistance of $19/30$ Ohm over each edge
and the dual graph, the icosahedron, with $11/30$ Ohm, or
the rhombic dodecahedron with equivalent resistance of $13/24$ Ohm over each edge
and the dual graph, the cuboctahedron, with $11/24$ Ohm.
This is not just a coincidence:
Suppose that a planar graph and its dual are both interpreted as electrical
networks with unit resistance for all edges. Now, if $r_e$ is the equivalent 
resistance over an edge $e$ and $r'_e$ is the equivalent 
resistance over the dual edge $e'$, then $r_e+r'_e=1$ (see~\cite[Exercise 7, Section 9.5]{bapat}, \cite[Theorem 2.3]{yang2013}).
The aim of this article is to generalize this formula to plane networks with 
arbitrary resistors (see Theorem~\ref{thm-main})
and to give a graph theoretic proof.
\section{Preliminaries}\label{sec-preliminaries}
Let $G$ be a finite connected graph with $n\ge 3$ vertices and
without loops. Multiple edges are allowed. Each edge $e$ is
considered as a resistor of resistance $R_e>0$. Then, we consider the
weighted Laplace matrix $L=(\ell_{ij})$ of $G$ defined as\footnotetext[1]{By convention, an empty sum is $0$.}
$$
\ell_{ij}:=\begin{cases}
-\sum \frac1{R_e} &\text{where the sum runs over all edges $e$}\\[-1.2mm]& \text{between the vertices $i$ and $j$\footnote{By convention, an empty sum is $0$.},}\\
a_{ii} &\text{if $i=j$}
\end{cases}
$$
where the diagonal values $a_{ii}$ are chosen such that the sum of all rows of $L$ vanishes.
The weight of a subgraph $H$ of $G$ is defined as
$$
\Pi(H):=\prod_{\text{$e$ an edge of $H$}}\frac1{R_e}\,.
$$
Then we recall the following:
\begin{proposition}\label{prop-matrix-tree}
\begin{enumerate}
\item The value of each cofactor $L_{ij}$ of $L$ is the sum of the weighs of all spanning trees of $G$.\label{qp}
\item If $L_{ij,ij}$ denotes the determinant of the matrix $L$ with rows $i,j$ and columns $i,j$ deleted,
then the quotient $L_{ij,ij}/R_e$ equals the sum of the weights of all spanning trees of $G$ which
contain the edge $e$ between the vertices $i$ and $j$.
\end{enumerate}
\end{proposition}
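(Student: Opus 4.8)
The plan is to prove statement (i) by the classical incidence-matrix and Cauchy--Binet argument, and then to deduce statement (ii) from (i) by contracting the edge $e$. For (i), note first that since every row and every column of $L$ sums to $0$, the adjugate of $L$ has constant entries (its columns lie in $\ker L=\operatorname{span}\{\mathbf 1\}$, and by the same reasoning applied to $L^{\top}$ so do its rows); hence all cofactors $L_{ij}$ coincide and it suffices to evaluate the principal cofactor $L_{nn}=\det\widetilde L$, where $\widetilde L$ is $L$ with its last row and column deleted. Fix an orientation of $G$ and let $B$ be the signed incidence matrix ($B_{ve}=+1$ if $v$ is the head of $e$, $-1$ if $v$ is the tail, $0$ otherwise), and put $W=\operatorname{diag}(1/R_e)$. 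A direct computation gives $L=BWB^{\top}$, hence $\widetilde L=\widetilde B\,W\,\widetilde B^{\top}$ with $\widetilde B$ the matrix $B$ with its last row removed, and the Cauchy--Binet formula yields
\[
\det\widetilde L=\sum_{S}\bigl(\det\widetilde B_{S}\bigr)^{2}\prod_{e\in S}\frac1{R_e},
\]
where $S$ runs over the sets of $n-1$ edges and $\widetilde B_{S}$ denotes the corresponding square submatrix of $\widetilde B$.

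The combinatorial heart of the matter is the standard lemma that $\det\widetilde B_{S}\in\{0,\pm1\}$, the value being $\pm1$ exactly when $S$ is (the edge set of) a spanning tree of $G$. If $S$ contains a cycle, adding up the columns of $B$ indexed by the cycle with the signs read off an orientation of that cycle gives the zero vector, so $\det\widetilde B_{S}=0$; if $S$ is acyclic and has $n-1$ edges, hence is a spanning tree, then $\widetilde B_{S}$ is unimodular, which follows by induction on $n$ upon Laplace-expanding along the row of a leaf of $S$ distinct from the vertex $n$. Substituting this back, only the spanning trees $S$ survive in the sum, each contributing $\prod_{e\in S}1/R_e=\Pi(S)$, which proves (i). The argument uses nothing but connectedness, so it applies verbatim to any connected multigraph.

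For (ii), let $e$ join $i$ and $j$ and let $G/e$ be the multigraph obtained from $G$ by deleting $e$ and then identifying $i$ and $j$ into one vertex $*$; parallel edges of $G$ may become loops of $G/e$, but loops influence neither spanning trees nor the Laplacian and are simply discarded. For vertices $k,\ell\notin\{i,j\}$ the $(k,\ell)$-entry of the Laplacian of $G/e$ equals $\ell_{k\ell}$, so the principal cofactor of the Laplacian of $G/e$ obtained by deleting the row and column of $*$ is exactly the minor $L_{ij,ij}$. By (i) applied to $G/e$, this equals $\sum_{T'}\Pi(T')$ with $T'$ ranging over the spanning trees of $G/e$ (regarded as edge sets of $G$). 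Finally $T\mapsto T\setminus\{e\}$ is a bijection between the spanning trees of $G$ containing $e$ and the spanning trees of $G/e$, and it rescales the weight by exactly $R_e$ because $\Pi(T)=\tfrac1{R_e}\,\Pi(T\setminus\{e\})$. Summing over $T$ gives $L_{ij,ij}=R_e\sum_{T\ni e}\Pi(T)$, i.e.\ $L_{ij,ij}/R_e=\sum_{T\ni e}\Pi(T)$, which is (ii).

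I expect the only genuine obstacle to be the unimodularity half of the incidence-matrix lemma, as that is where acyclicity and connectedness really enter; the remaining points that need care are the vanishing of all non-principal cofactors (needed so that ``each cofactor $L_{ij}$'' in (i) is legitimate) and the bookkeeping around parallel edges, which become loops in $G/e$ and must not disturb the Laplacian identities.
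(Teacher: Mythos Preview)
Your proof is correct, but the strategy differs from the paper's on both parts. For (i) the paper simply invokes the weighted Matrix Tree Theorem as a black box, whereas you supply a complete Cauchy--Binet/incidence-matrix proof; your version is self-contained and makes explicit why the cofactors all agree. For (ii) the paper argues by \emph{deletion}: it writes $\sum_{T\ni e}\Pi(T)=L_{ii}-L^e_{ii}$ with $L^e$ the Laplacian of $G-e$, then expands both cofactors along row $j$ and observes that every term cancels except the $(j,j)$ one, leaving $\tfrac{1}{R_e}(L_{ii})_{jj}=L_{ij,ij}/R_e$. You instead argue by \emph{contraction}: you identify $L_{ij,ij}$ as the principal cofactor (at the merged vertex) of the Laplacian of $G/e$, and then use the bijection between spanning trees of $G$ through $e$ and spanning trees of $G/e$. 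Both routes are standard; yours is more combinatorial and arguably more conceptual, while the paper's stays entirely inside linear algebra on the original matrix. One small slip in your closing paragraph: the non-principal cofactors do not vanish, they equal the principal ones --- but your actual argument in the body (via the adjugate and the one-dimensional kernel) states and proves this correctly.
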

{\em Proof.}
The first part of the proposition follows directly from a general version of
the Matrix Tree Theorem (see, e.g., \cite[Theorem VI.27]{tutte}).

For the second part we proceed as follows: Let $S(G)$ denote the set of all spanning trees of $G$.
Observe, that by using the edge $e$ between the vertices $i$ and $j$ we can split up the sum of the weights of all spanning trees of $G$  as follows:
\begin{equation}\label{eq-spanning}
\sum_{T \in S(G)}\Pi(T) = \sum_{T \in S(G) \atop e \in T} \Pi(T) + \sum_{T \in S(G) \atop e \notin T} \Pi(T).
\end{equation}
Furthermore, the second sum on the right-hand side of~(\ref{eq-spanning})  corresponds to the sum of the weights of all spanning trees of $G-e$, i.e. $G$
with edge $e$ removed. Using the first part of the proposition, we get from~(\ref{eq-spanning}) that
$$
\sum_{T \in S(G) \atop e \in T} \Pi(T) = L_{ii} - L^e_{ii},
$$
where $L^e=(\ell^e_{hk})$ is the weighted Laplace matrix of $G-e$. 
We have
$$
\ell^e_{ij}=\ell^e_{ji}=\ell_{ij}+\frac1{R_e},\quad \ell^e_{ii}=\ell_{ii}-\frac1{R_e},\quad \ell^e_{jj}=\ell_{jj}-\frac1{R_e}
$$
and $\ell^e_{hk}=\ell_{hk}$ for all other $h,k$.
The term $L_{ii} - L^e_{ii}$ can be computed using Laplace's cofactor expansion. Expanding both $L_{ii}$ and $L^e_{ii}$ along the $j$-th row yields
$$
\sum_{T \in S(G) \atop e \in T} \Pi(T) = \sum_{k\neq i}\ell_{jk}(L_{ii})_{jk} - \sum_{k\neq i}\ell^e_{jk}(L^e_{ii})_{jk}.
$$
Since the cofactors $(L_{ii})_{jk}$ and $(L^e_{ii})_{jk}$ are equal for  all $k$ we are left with
\begin{equation}
\sum_{T \in S(G) \atop e \in T} \Pi(T) =(\ell_{jj}-\ell^e_{jj})  (L_{ii})_{jj}= \frac{L_{ij,ij}}{R_{e}}.\tag*{$\Box$}
\end{equation}
{\bf Remark.} The second part of Proposition~\ref{prop-matrix-tree} follows also quite 
easily from the All Minors Matrix Tree Theorem (see~\cite{chaiken}).

The connection to the equivalent resistance is given by the following
\begin{proposition}\label{prop-resistance}
The equivalent resistance $r_e$ over the edge $e$ connecting the vertices $i$ and $j$
is given by
\begin{equation}\label{eq-res}
r_e=\frac{L_{ij,ij}}{L_{11}}\,.
\end{equation}
\end{proposition}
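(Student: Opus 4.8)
The plan is to translate Kirchhoff's and Ohm's laws into a linear system for the vertex potentials, ground one endpoint of $e$, and then read off the quotient $L_{ij,ij}/L_{11}$ from Cramer's rule.

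First I would feed a current of strength $I>0$ into the network at vertex $i$ and draw it off at vertex $j$. Writing $v=(v_1,\dots,v_n)^\top$ for the vector of vertex potentials, the combination of Ohm's law with Kirchhoff's current law at every vertex is exactly the system $Lv=c$, where $c$ has entries $c_i=I$, $c_j=-I$ and $c_k=0$ otherwise; this uses that $\ell_{hk}$ is minus the total conductance between $h$ and $k$ and that the rows of $L$ sum to $0$. By definition the equivalent resistance over $e$ is $r_e=(v_i-v_j)/I$, and since $L\mathbf 1=0$ this value does not depend on which of the solutions $v$ of $Lv=c$ is chosen.

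Next I would ground the endpoint $j$, i.e.\ impose $v_j=0$ and discard the $j$-th equation of $Lv=c$, which is redundant because row $j$ of $L$ equals minus the sum of the other rows and $c_j=-\sum_{k\ne j}c_k$. This leaves the system $\hat L\hat v=\hat c$, where $\hat L$ is the principal submatrix of $L$ obtained by deleting row and column $j$, and $\hat v$, $\hat c$ are the corresponding restrictions of $v$, $c$; note that $\hat c$ has a single nonzero entry, equal to $I$, in the coordinate indexed by $i$. The matrix $\hat L$ is invertible: for any $w$ with $w_j=0$ one has $w^\top Lw=\sum_{e=\{a,b\}}\frac1{R_e}(w_a-w_b)^2$, which vanishes only if $w$ is constant on $G$, hence (as $G$ is connected and $w_j=0$) only if $w=0$. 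Thus $\hat v=\hat L^{-1}\hat c$ is the unique solution, $v_i=I\,(\hat L^{-1})_{ii}$, and therefore $r_e=(\hat L^{-1})_{ii}$.

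Finally I would apply Cramer's rule: since the row and column deleted from $\hat L$ in computing the $(i,i)$-entry of $\hat L^{-1}$ carry the same index, the corresponding cofactor sign is $+1$, so
$$
(\hat L^{-1})_{ii}=\frac{\det\bigl(\hat L\text{ with row }i\text{ and column }i\text{ deleted}\bigr)}{\det\hat L}=\frac{L_{ij,ij}}{L_{jj}}\,,
$$
because deleting row and column $i$ from $\hat L$ amounts to deleting rows and columns $i,j$ from $L$, and $\det\hat L$ is exactly the cofactor $L_{jj}$. By the first part of Proposition~\ref{prop-matrix-tree} every diagonal cofactor of $L$ equals the sum of the weights of all spanning trees of $G$, so $L_{jj}=L_{11}$, and~(\ref{eq-res}) follows. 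I expect the only points needing genuine care to be the invertibility of $\hat L$ (positive definiteness of the reduced Laplacian of a connected graph) and the bookkeeping that matches the Cramer minors to the symbols $L_{ij,ij}$ and $L_{11}$; the remainder is routine and I do not foresee a serious obstacle.
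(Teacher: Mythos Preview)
Your argument is correct and essentially self-contained: you set up $Lv=c$ from Kirchhoff's and Ohm's laws, ground the endpoint $j$, show the reduced Laplacian $\hat L$ is positive definite on a connected graph, and then read off $r_e=(\hat L^{-1})_{ii}=L_{ij,ij}/L_{jj}=L_{ij,ij}/L_{11}$ via Cramer's rule and Proposition~\ref{prop-matrix-tree}(\ref{qp}). The bookkeeping identifying $\det\hat L$ with the diagonal cofactor $L_{jj}$ and the double-deletion minor with $L_{ij,ij}$ is clean.

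The paper takes a different and much shorter route: it does not re-derive the resistance formula from circuit laws at all. Instead it observes that the weighted Laplacian of the multigraph $G$ coincides with the weighted Laplacian of the simple graph $H$ obtained by collapsing each bundle of parallel edges to a single edge carrying the equivalent parallel resistance, notes that this collapse preserves equivalent resistances between vertices, and then invokes an external reference (\cite[Lemma~2]{gupta}) together with Proposition~\ref{prop-matrix-tree} for the simple-graph case. Your approach buys independence from that citation and gives the reader the actual mechanism (grounding plus Cramer); the paper's approach is briefer and makes the multigraph-to-simple-graph reduction explicit, but outsources the linear-algebra core. Either is perfectly adequate here.
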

{\em Remark.} Recall, that by Proposition~\ref{prop-matrix-tree}(\ref{qp}) the denominator in~(\ref{eq-res}) can be replaced by
any other cofactor $L_{hk}$.
\begin{proof}
Observe, that the Laplace matrix of the weighted multigraph $G$ corresponds
to the Laplace matrix of a weighted simple graph $H$ where the multiple
edges $e_1,\ldots e_k$ between each two vertices $i$ and $j$ of $G$ are collapsed to a single edge $e$ with weight 
$$R_e=\frac1{\frac1{R_{e_1}}+ \ldots+\frac1{R_{e_k}}}.$$
However this value corresponds exactly to the equivalent resistance 
of the parallel resistors $R_{e_1},\ldots,R_{e_k}$. Thus,
the equivalent resistance over the vertices $i$ and $j$ in $G$
equals the equivalent resistance over the vertices $i$ and $j$ in $H$
and the claim follows from~\cite[Lemma 2]{gupta} and Proposition~\ref{prop-matrix-tree}. 
\end{proof}

From now on we assume that is $G$ a finite planar multigraph with dual graph $G'$.
Recall that in general $G'$  depends on the embedding of $G$ in the plane.
\begin{definition}\label{def-dual}
Let $G'$ be the dual of a planar embedded multigraph $G$, and
let $G$ be interpreted as an electrical network by associating 
to each edge $e$ a resistance $R_e>0$.
For each edge $e$ of $G$, we define 
the  electrical resistance $R_{e'}$ of the dual edge $e'$
to be the conductance of $e$, i.e. $R_{e'}:=1/R_e$. 
Then, $G'$ equipped with these resistances is called the
{\em dual electrical network\/} of $G$.
\end{definition}
Observe, that the Laplace matrix $L'=(\ell'_{ij})$ of the dual electrical network $G'$ is given by
$$
\ell'_{ij}:=\begin{cases}
-\sum \frac1{R_e'}=-\sum  R_{e} &\text{where the sum runs over all edges $e'$}\\[-1.2mm]& \text{between the vertices $i$ and $j$ of $G'$,}\\
a_{ii} &\text{if $i=j$}
\end{cases}
$$
where the diagonal values $a_{ii}$ are chosen such that the sum of all rows of $L'$ vanishes.

Similarly the weight of a subgraph $H'$ of $G'$ is
$$
\Pi(H'):=\prod_{\text{$e'$ an edge of $H'$}}\frac1{R_{e'}} =\prod_{\text{$e'$ an edge of $H'$}}{R_{e}} \,.
$$
Then we have:
\begin{proposition}\label{prop-dual}
\begin{enumerate}
\item The value of an arbitrary cofactor $L'_{ij}$ of $L'$ is equal to the
sum of the weights of all spanning trees in $G'$ and there holds:
\begin{equation}\label{eq-weights}
L'_{ij}=L_{ij}\Pi(G')
\end{equation}
where $\Pi(G')=\prod R_k$ is the total weight of $G'$.
\item The product $R_e L'_{ij,ij}$ equals the sum of the weights of the spanning trees in $G'$
which contain the dual edge $e'$ of edge $e$.\label{prop-dual-ii}
\end{enumerate}
\end{proposition}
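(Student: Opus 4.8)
The plan is to exploit the well-known bijection between spanning trees of a connected plane graph $G$ and spanning trees of its dual $G'$: a set $F$ of edges is a spanning tree of $G$ if and only if the complementary set $(E(G)\setminus F)'$ of dual edges is a spanning tree of $G'$. For part (i), I would first establish~(\ref{eq-weights}) by summing over this bijection. If $T$ is a spanning tree of $G$ and $T^c$ denotes the corresponding spanning tree of $G'$ built from the duals of the edges \emph{not} in $T$, then $\Pi(T^c)=\prod_{e\notin T}R_e = \Pi(G')\cdot\prod_{e\in T}\frac1{R_e}=\Pi(G')\,\Pi(T)$, since $\Pi(G')=\prod_{k}R_k$ runs over all edges of $G$. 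Summing over all spanning trees gives $\sum_{T'\in S(G')}\Pi(T') = \Pi(G')\sum_{T\in S(G)}\Pi(T)$, and by Proposition~\ref{prop-matrix-tree}(\ref{qp}) applied to $G$ and to $G'$, the left side is $L'_{ij}$ for any cofactor and the right side is $\Pi(G')\,L_{ij}$. That simultaneously proves that every cofactor of $L'$ has the same value (the sum of weights of spanning trees of $G'$) and identity~(\ref{eq-weights}).

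For part (ii), the key observation is that the tree bijection interchanges ``contains $e$'' with ``does not contain $e'$'': a spanning tree $T$ of $G$ contains $e$ precisely when its partner $T^c$ in $G'$ omits $e'$. Hence the spanning trees of $G'$ containing $e'$ correspond exactly to spanning trees of $G$ \emph{not} containing $e$, i.e.\ to spanning trees of $G-e$, whose dual is $G'$ with $e'$ contracted. I would then write
\begin{equation}
\sum_{T'\in S(G')\atop e'\in T'}\Pi(T') \;=\; \Pi(G')\!\!\sum_{T\in S(G)\atop e\notin T}\!\!\Pi(T)\;=\;\Pi(G')\Bigl(\sum_{T\in S(G)}\Pi(T)-\sum_{T\in S(G)\atop e\in T}\Pi(T)\Bigr),\notag
\end{equation}
where the first equality again uses $\Pi(T^c)=\Pi(G')\,\Pi(T)$. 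By Proposition~\ref{prop-matrix-tree} the bracket equals $L_{ij}-L_{ij,ij}/R_e$, so the sum is $\Pi(G')\bigl(L_{ij}-L_{ij,ij}/R_e\bigr)$. It then remains to identify this with $R_e\,L'_{ij,ij}$. Applying Proposition~\ref{prop-matrix-tree}(ii) to the dual network $G'$ (whose edge $e'$ has resistance $R_{e'}=1/R_e$) tells us that $\sum_{T'\ni e'}\Pi(T') = L'_{ij,ij}/R_{e'} = R_e\,L'_{ij,ij}$, which is exactly the asserted formula; alternatively one can verify $\Pi(G')\bigl(L_{ij}-L_{ij,ij}/R_e\bigr)=R_e L'_{ij,ij}$ directly from~(\ref{eq-weights}) together with the analogue of~(\ref{eq-weights}) relating $L'_{ij,ij}$ to a spanning-tree sum in $G'$.

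The main obstacle I anticipate is not any computation but the careful justification of the spanning-tree bijection and, in particular, that it restricts correctly to the sub- and contracted graphs: one must check that $(G-e)' = G'/e'$ (deletion in $G$ corresponds to contraction in $G'$), that this operation preserves connectedness and planarity so that Proposition~\ref{prop-matrix-tree} still applies, and that loops/multiple edges created by contraction are handled consistently with the convention that loops are discarded (a loop in $G'$ would correspond to a bridge in $G$, which cannot occur if $e$ lies on a cycle, and the degenerate cases where $e$ is a bridge or a loop must be addressed separately, noting that $G$ is assumed loopless and that a bridge $e$ forces $r_e=R_e$). Once these topological facts about plane duality are in hand, both parts follow by bijective summation and Proposition~\ref{prop-matrix-tree}.
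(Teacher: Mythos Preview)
Your proposal is correct and, for part~(i), essentially identical to the paper's proof: both use the canonical bijection $T\leftrightarrow T^c$ between $S(G)$ and $S(G')$, compute $\Pi(T^c)=\Pi(G')\Pi(T)$, and sum to obtain $L'_{ij}=\Pi(G')L_{ij}$ via Proposition~\ref{prop-matrix-tree}(\ref{qp}).

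For part~(ii) you do more work than the paper does. The paper dispatches (ii) in one line: since $e'$ has resistance $R_{e'}=1/R_e$ in $G'$, Proposition~\ref{prop-matrix-tree}(ii) applied directly to $G'$ gives $\sum_{T'\ni e'}\Pi(T')=L'_{ij,ij}/R_{e'}=R_e\,L'_{ij,ij}$, and that is the entire content of the statement. You eventually arrive at exactly this observation, but only after first deriving the identity $\sum_{T'\ni e'}\Pi(T')=\Pi(G')\bigl(L_{ij}-L_{ij,ij}/R_e\bigr)$ via the bijection. That extra computation is not needed for Proposition~\ref{prop-dual} itself; in the paper it appears later, as equation~(\ref{eq-oooo}) in the proof of Theorem~\ref{thm-main}. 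Consequently, the topological concerns you raise in your final paragraph (deletion--contraction duality $(G-e)'=G'/e'$, handling bridges and loops, preservation of connectedness) are irrelevant here: neither part of the proposition requires passing to a minor of $G$ or $G'$, only the global spanning-tree bijection and a direct invocation of Proposition~\ref{prop-matrix-tree} on $G'$.
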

\begin{proof}
We only have to show equation~(\ref{eq-weights}), since all other statements
follow from Proposition~\ref{prop-matrix-tree}. Let $S(G)$ and $S(G')$ denote the set of all spanning trees of $G$ and its dual $G'$ respectively. Furthermore, let $\Psi$ denote the canonocal bijection from $S(G')$ to $S(G)$ given by $\Psi(T') = \{e \in G | e' \in G' - T' \}$. Observe, that the weight of a spanning tree $T'$ of $G'$ can be expressed as
\begin{equation}\label{eq-pit}
\Pi(T') = \frac{\Pi(G')}{\Pi(G'-T')}.
\end{equation}
Using the bijection $\Psi$ and Definition~\ref{def-dual} in~(\ref{eq-pit}), namely the fact, that the 
electrical resistance of an edge in $G'$ is equal to the conductance of the dual 
edge in $G$, we get
\begin{equation}\label{eq-pitt}
\Pi(T') = \Pi(G')\Pi(\Psi(T')).
\end{equation}
The characterization of $L_{ij}$ and $L'_{ij}$ as the sum of the weights of all 
spanning trees of $G$ and $G'$ respectively leads, together with~(\ref{eq-pitt}), to
\begin{align*}
\qquad L'_{ij} = \sum_{T' \in S(G')} \Pi(T') &= \sum_{T' \in S(G')} \Pi(G')\Pi(\Psi(T')) =\\&= \Pi(G')\sum_{T \in S(G)} \Pi(T) = \Pi(G')L_{ij}, \qquad
\end{align*}
where we have used the bijectivity of $\Psi$ in the penultimate equality. This completes the proof.
\end{proof}
\section{The sum formula in dual networks}
The main result is now the following:
\begin{theorem}\label{thm-main}
Let $R_e$ be the resistance of an edge $e$ and $R_{e'}=1/R_e$ the resistance
of the dual edge $e'$ in the dual electrical network. Let $r_e$ denote the equivalent resistance over edge $e$
and $r_{e'}$ denote the equivalent resistance over edge $e'$. Then
$$
\frac{r_e}{R_e}+\frac{r_{e'}}{R_{e'}}=1.
$$
\end{theorem}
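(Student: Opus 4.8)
The plan is to express both equivalent resistances using the formulas already available and to reduce the identity to a single algebraic relation among cofactors of $L$. By Proposition~\ref{prop-resistance} we have $r_e = L_{ij,ij}/L_{11}$, where $i,j$ are the endpoints of $e$. For the dual edge $e'$, the same proposition applied to $G'$ gives $r_{e'} = L'_{i'j',i'j'}/L'_{11}$, where $i',j'$ are the two faces of $G$ (= vertices of $G'$) incident to $e$. Dividing by the respective resistances, and noting $R_{e'} = 1/R_e$, the claim becomes
\begin{equation}\label{eq-plan-goal}
\frac{L_{ij,ij}}{R_e\, L_{11}} + \frac{R_e\, L'_{i'j',i'j'}}{L'_{11}} = 1.
\end{equation}

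Next I would rewrite each term so that numerators and denominators are sums of weights of spanning trees. By Proposition~\ref{prop-matrix-tree}(ii), $L_{ij,ij}/R_e = \sum_{T \in S(G),\, e \in T} \Pi(T)$, and $L_{11} = \sum_{T \in S(G)} \Pi(T)$, so the first term of~(\ref{eq-plan-goal}) is the fraction of the total spanning-tree weight of $G$ carried by trees that use $e$. By Proposition~\ref{prop-dual}(ii), $R_e\, L'_{i'j',i'j'} = \sum_{T' \in S(G'),\, e' \in T'} \Pi(T')$, and by Proposition~\ref{prop-dual}(i), $L'_{11} = \Pi(G') L_{11}$; moreover~(\ref{eq-pitt}) from the proof of Proposition~\ref{prop-dual} gives $\Pi(T') = \Pi(G')\Pi(\Psi(T'))$ for the canonical bijection $\Psi: S(G') \to S(G)$. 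Substituting, the second term of~(\ref{eq-plan-goal}) becomes $\big(\sum_{T' \in S(G'),\, e' \in T'} \Pi(\Psi(T'))\big)\big/\big(\sum_{T \in S(G)} \Pi(T)\big)$, so~(\ref{eq-plan-goal}) will follow once we show
$$
\sum_{T \in S(G),\, e \in T} \Pi(T) \;+\; \sum_{T' \in S(G'),\, e' \in T'} \Pi(\Psi(T')) \;=\; \sum_{T \in S(G)} \Pi(T).
$$

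The heart of the argument — and the step I expect to be the main obstacle — is the combinatorial identification of the second sum. The key fact is that under the bijection $\Psi(T') = \{e \in G : e' \notin T'\}$, the dual edge $e'$ lies in $T'$ if and only if $e'$ is \emph{not} in the cotree $G' - T'$, i.e. if and only if $e \notin \Psi(T')$. Hence the map $\Psi$ restricts to a bijection between $\{T' \in S(G') : e' \in T'\}$ and $\{T \in S(G) : e \notin T\}$, and it is weight-preserving in the sense that the summand $\Pi(\Psi(T'))$ on the left is exactly $\Pi(T)$ for the corresponding $T$. Therefore $\sum_{T' \in S(G'),\, e' \in T'} \Pi(\Psi(T')) = \sum_{T \in S(G),\, e \notin T} \Pi(T)$, and adding this to $\sum_{T \in S(G),\, e \in T} \Pi(T)$ recovers the full sum $\sum_{T \in S(G)} \Pi(T)$ by the partition~(\ref{eq-spanning}). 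This proves~(\ref{eq-plan-goal}) and hence the theorem. The one point requiring care is the classical but nontrivial fact that $\Psi$ is a well-defined bijection $S(G') \to S(G)$ with the stated complementation property; this is standard plane-duality, already invoked in the proof of Proposition~\ref{prop-dual}, so I would simply cite it there.
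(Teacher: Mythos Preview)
Your proposal is correct and follows essentially the same route as the paper's proof: express both equivalent resistances via cofactors (Proposition~\ref{prop-resistance}), translate the cofactors into weighted spanning-tree sums (Propositions~\ref{prop-matrix-tree} and~\ref{prop-dual}), and then use the complementation property of the canonical bijection $\Psi$ to identify $\{T'\in S(G'):e'\in T'\}$ with $\{T\in S(G):e\notin T\}$, so that the two fractions add to~$1$ by the partition~(\ref{eq-spanning}). The paper packages the same computation slightly differently---it first derives the intermediate identity $L'_{ij,ij}=\frac{\Pi(G')}{R_e}\bigl(L_{ii}-L_{ij,ij}/R_e\bigr)$ and then substitutes---but the ingredients and the key combinatorial step are identical.
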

For a proof of this formula based upon physical arguments see~\cite{furrer}. 
Here, we provide a purely graph theoretic proof.
\begin{proof}
Let $e$ be an edge between the vertices $i$ and $j$ in $G$, and we may
assume that the vertices are numbered such that $e'$ also runs between
the vertices $i$ and $j$ in $G'$.
In a first step, we are going to derive a new expression for $L'_{ij,ij}$.
Let $S(G)$ and $S(G')$ denote the set of all spanning trees of $G$ and its dual $G'$ respectively, and let $\Psi$ be the canonical bijection from $S(G')$ to $S(G)$ as defined in the proof of Proposition~\ref{prop-dual}. By part~\ref{prop-dual-ii} of Proposition~\ref{prop-dual} we have
\begin{equation}\label{eq-o}
L'_{ij,ij} = \frac{1}{R_{e}}\sum_{T'\in S(G') \atop e'\in T'}\Pi(T').
\end{equation}
The identity~(\ref{eq-pitt}) and the fact, that $\Psi(T')$ does not contain the edge $e$ if $T'$ contains the dual edge $e'$ allow
to rewrite the right-hand side of~(\ref{eq-o}) as follows
\begin{equation}\label{eq-oo}
\frac{1}{R_{e}}\sum_{T'\in S(G') \atop e'\in T'}\Pi(T') = 
\frac{\Pi(G')}{R_{e}}\sum_{T \in S(G) \atop e \notin T}\Pi(T).
\end{equation}
Furthermore, the sum on the right-hand side of~(\ref{eq-oo}) can be expressed as the difference between the sum of the weights of all spanning trees of $G$ and the sum of the weights of the spanning trees, that contain the edge $e$. Therefore, it holds that
\begin{equation}\label{eq-ooo}
L'_{ij,ij} = \frac{\Pi(G')}{R_{e}}\Bigl(\sum_{T \in S(G)}\Pi(T) - 
\sum_{T \in S(G) \atop e \in T}\Pi(T)\Bigr).
\end{equation}
Using Proposition~\ref{prop-matrix-tree} in~(\ref{eq-ooo}) yields the following identity:
\begin{equation}\label{eq-oooo}
L'_{ij,ij} = \frac{\Pi(G')}{R_{e}}\Bigl(L_{ii} - \frac{L_{ij,ij}}{R_{e}}\Bigr).
\end{equation}
Now, in a second step, it follows from Proposition~\ref{prop-resistance} and Definition~\ref{def-dual}, that
\begin{equation}\label{eq-ooooo}
\frac{r_e}{R_e}+\frac{r_{e'}}{R_{e'}} = \frac{L_{ij,ij}}{L_{ii}R_{e}} + 
R_{e}\frac{L'_{ij,ij}}{L'_{ii}}\,.
\end{equation}
Using the first part of Proposition~\ref{prop-dual} and~(\ref{eq-oooo}), we can rewrite the right-hand side of~(\ref{eq-ooooo})
and simplify the resulting expression to arrive at
$$
\frac{r_e}{R_e}+\frac{r_{e'}}{R_{e'}} = \frac{L_{ij,ij}}{L_{ii}R_{e}} +
R_{e}\frac{\frac{\Pi(G')}{R_{e}}(L_{ii} - \frac{L_{ij,ij}}{R_{e}})}{L_{ii}\Pi(G')}=1,
$$
as claimed.
\end{proof}
\begin{example}
Let us consider the following electrical network:
\begin{center}
\begin{tikzpicture}[x=80,y=80]
\draw[fill=black] (0,1) circle (3pt);
\draw[fill=black] (0,0) circle (3pt);
\draw[fill=black] (1,0) circle (3pt);
\draw[fill=black] (1,1) circle (3pt);
\draw (0,1) node[anchor=south east] {$1$};
\draw (0,0) node[anchor=north east] {$2$};
\draw (1,0) node[anchor=north west] {$3$};
\draw (1,1) node[anchor=south west] {$4$};
\draw [line width=.8pt] (1,1) -- node[above] {$R_1$} (0,1)-- node[left] {$R_2$}  (0,0) -- node[below] {$R_3$} (1,0);
\draw [line width=.8pt] (1,0) to[out=45,in=-45] node[right] {$R_5$}  (1,1);
\draw [line width=.8pt] (1,0) to[out=135,in=-135] node[left] {$R_4$}(1,1);
\end{tikzpicture}
\end{center}
The corresponding Laplace matrix $L$ is
$$
L=\begin{pmatrix}
\frac1{R_1}+\frac1{R_2} & -\frac1{R_2} & 0 &-\frac1{R_1}\\
-\frac1{R_2} & \frac1{R_2}+\frac1{R_3} & -\frac1{R_3} & 0\\
0 & -\frac1{R_3} &\frac1{R_3}+\frac1{R_4}+\frac1{R_5}& -\frac1{R_4}-\frac1{R_5}\\
-\frac1{R_1}&0&-\frac1{R_4}-\frac1{R_5}& \frac1{R_1}+\frac1{R_4}+\frac1{R_5}
\end{pmatrix}
$$
The cofactor
$$
L_{11}=\frac1{R_1R_2R_3R_4R_5}\bigl(R_4(R_1+R_2+R_3) +R_5(R_1+R_2+R_3+R_4)\bigr)
$$
corresponds indeed to the total weight of all spanning trees of $G$ as one easily checks directly.
For the edge $e$ between the vertices $3$ and $4$ with resistance $R_4$,
we get
$$
L_{34,34}=\frac1{R_1R_2}+\frac1{R_1R_3}+\frac1{R_2R_3}
$$
which, divided by $R_4$, gives the sum of the weights of the trees which contain $e$,
as stated in Proposition~\ref{prop-matrix-tree}.

The dual network looks as follows:
\begin{center}
\definecolor{gray}{rgb}{.8,.8,.8}
\begin{tikzpicture}[x=100,y=100]
\clip(-.5,-.4) rectangle (2.45,1.65);
\draw[color=gray, fill=gray] (0,1) circle (3pt);
\draw[color=gray, fill=gray] (0,0) circle (3pt);
\draw[color=gray, fill=gray] (1,0) circle (3pt);
\draw[color=gray, fill=gray] (1,1) circle (3pt);
\draw[color=gray] (0,1) node[anchor=south east] {$1$};
\draw[color=gray] (0,0) node[anchor=north east] {$2$};
\draw[color=gray] (1,0) node[anchor=north west] {$3$};
\draw[color=gray] (1,1) node[anchor=south west] {$4$};
\draw [color=gray, line width=.8pt] (1,1) -- node[above, style={pos=.75}] {$R_1$} (0,1)-- node[left, style={pos=.75}] {$R_2$}  (0,0) -- node[below, style={pos=.25}] {$R_3$} (1,0);
\draw [color=gray, line width=.8pt] (1,0) to[out=45,in=-45] node[right, style={pos=.25}] {$R_5$}  (1,1);
\draw [color=gray, line width=.8pt] (1,0) to[out=135,in=-135] node[left, style={pos=.25}] {$R_4$}(1,1);
\draw[color=black, fill=black] (1,.5) circle (3pt);
\draw[color=black, fill=black] (.3,.5) circle (3pt);
\draw[color=black, fill=black] (1.8,.5) circle (3pt);
\draw [color=black, line width=.8pt] (.3,.5) -- node[above] {$1/R_4$} (1,.5) -- node[above] {$1/R_5$} (1.8,.5);
\draw [color=black, line width=.8pt] (.3,.5) to[out=90,in=90,distance=100] node[above, style={pos=.5}] {$1/R_1$}(1.8,.5);
\draw [color=black, line width=.8pt] (.3,.5) to[out=-90,in=-90,distance=100] node[below, style={pos=.5}] {$1/R_3$}(1.8,.5);
\draw [color=black, line width=.8pt] (.3,.5) to[out=-180,in=180,distance=120] node[left, style={pos=.5},xshift=-2] {$1/R_2$}(1.05,1.6) to[out=0,in=0,distance=120] (1.8,.5);
\draw[color=black] (.3,.5) node[anchor=north east] {$1$};
\draw[color=black] (1,.5) node[anchor=north,yshift=-2] {$2$};
\draw[color=black] (1.8,.5) node[anchor=north west] {$3$};

\end{tikzpicture}

\end{center}
and the corresponding Laplace matrix is
$$
L'=\begin{pmatrix}
R_1+R_2+R_3+R_4 & -R_4 & -(R_1+R_2+R_3)\\
-R_4&R_4+R_5 & -R_5\\
-(R_1+R_2+R_3) & -R_5 &R_1+R_2+R_3+R_5
\end{pmatrix}.
$$
The cofactor
$$
L'_{11}=R_4(R_1+R_2+R_3) +R_5(R_1+R_2+R_3+R_4)
$$
is the total weight of the spanning trees of $G'$. And indeed, we have
$$
L'_{11}=L_{11}R_1R_2R_3R_4R_5
$$
as predicted by Proposition~\ref{prop-dual}. Furthermore, we get
$$
L'_{12,12}=R_1+R_2+R_3+R_5
$$
which gives according to Proposition~\ref{prop-matrix-tree}, after multiplication 
by $R_4$, the total weight of the trees in $G'$ which contain the dual edge $e'$.

Now, the equivalent resistances over edge $e$ and $e'$ respectively are, according to Proposition~\ref{prop-resistance},
$$\arraycolsep=2pt\def\arraystretch{2.2}
\begin{array}{lll}
r_4&=\displaystyle{\frac{L_{34,34}}{L_{11}} }&=\displaystyle{ \frac{R_4R_5(R_1+R_2+R_3)}{R_4(R_1+R_2+R_3) +R_5(R_1+R_2+R_3+R_4)}}\\
r'_4&=\displaystyle{\frac{L'_{12,12}}{L'_{11}}}&= \displaystyle{\frac{R_1+R_2+R_3+R_5}{R_4(R_1+R_2+R_3) +R_5(R_1+R_2+R_3+R_4)}}
\end{array}
$$
and finally indeed, with $R'_4=1/R_4$,
$$
\frac{r_4}{R_4}+\frac{r'_4}{R'_4}=1.
$$
\end{example}

\bibliographystyle{plain}

\end{document}